\documentclass[reqno, 12pt]{article} 

\pdfoutput=1

\usepackage{enumerate}
\usepackage{latexsym}
\usepackage[centertags]{amsmath}
\usepackage{amsfonts}
\usepackage{amssymb}
\usepackage{amsthm}
\usepackage{newlfont}
\usepackage{graphics}
\usepackage{color}
\usepackage{float}
\usepackage{diagbox}
\usepackage[linesnumbered,ruled,vlined]{algorithm2e}
\textwidth 480pt \hoffset -60pt \textheight 9in \voffset -30pt
\parindent 8mm
\parskip 2mm

\usepackage{tikz}
\usetikzlibrary{graphs,positioning,quotes,shapes.geometric}
\usepackage{hyperref}
\usepackage{longtable}
\usepackage{rotating}
\usepackage{multirow}
\usepackage{extarrows}
\usepackage[sort,compress,numbers]{natbib}
\usepackage[utf8]{inputenc}

\newtheorem{thm}{Theorem}[section]
\newtheorem{cor}[thm]{Corollary}
\newtheorem{lem}[thm]{Lemma}
\newtheorem{prop}[thm]{Proposition}

\theoremstyle{mydefinition}

\theoremstyle{myremark}

\newtheorem{exa}[thm]{Example}
\newtheorem{prob}[thm]{Open problem}
\allowdisplaybreaks[4]

\def\CT{\mathop{\mathrm{CT}}}

\SetKwInput{KwInput}{Input}                
\SetKwInput{KwOutput}{Output}              

\newcommand\ci[1]{\mathrm{\textcircled{#1}}}

\title{A Fast Algorithm for Denumerants with Three Variables}

\author{Feihu Liu$^{1}$ and Guoce Xin$^{2,}$ \thanks{This work is partially supported by the National Natural Science Foundation of China (No.12071311).}
\\[2mm]
{\small $^{1, 2}$ School of Mathematical Sciences,}\\[-0.8ex]
{\small Capital Normal University, Beijing, 100048, P.R.~China}\\
{\small $^1$ Email address: liufeihu7476@163.com}\\
{\small $^2$ Email address: guoce\_xin@163.com}
}

\date{June 27, 2024}

\begin{document}

\maketitle

\begin{abstract}
Let $a,b,c$ be distinct positive integers such that $a<b<c$ and $\gcd(a,b,c)=1$. For any non-negative integer $n$, the denumerant function $d(n;a,b,c)$ denotes the number of solutions of the equation $ax_1+bx_2+cx_3=n$ in non-negative integers $x_1,x_2,x_3$. 
We present an algorithm that computes $d(n;a,b,c)$ with a time complexity of $O(\log b)$.
\end{abstract}

\noindent
\begin{small}
 \emph{Mathematic subject classification}: Primary 05A17; Secondary 05A15, 11P81.
\end{small}

\noindent
\begin{small}
\emph{Keywords}: Denumerant; Restricted partition function; Diophantine equation.
\end{small}

\section{Introduction}
Let $A=\{a_1,a_2,...,a_k\}$ be a nonempty set of positive integers. The \emph{restricted partition function of the set} $A$, denoted $p_A(n)$, counts the number of solutions in non-negative integers $x_i$, $1\leq i\leq k$, to the equation
$$a_1x_1+a_2x_2\cdots +a_kx_k=n.$$
When $\gcd(A) = 1$, the function $p_A(n)$ is referred to as the \emph{denumerant}, denoted as $d(n;a_1,\ldots,a_k)$. This concept was introduced by Sylvester \cite{J. J. Sylvester}. A related problem is to determine the largest non-negative integer $F(A)$ for which $d(F(A); a_1, \ldots, a_k) = 0$. This integer is known as the \emph{Frobenius number} of $A$.

For the case $k=2$, Sylvester \cite{J. J. Sylvester1} derived the formula for the Frobenius number $F(A)=a_1a_2-a_1-a_2$. Sert\"oz \cite{Sertoz} and Tripathi \cite{A.Tripathi} independently obtained an explicit formula for $d(n;a_1,a_2)$. However, Curtis \cite{F.Curtis} demonstrated that for $k \geq 3$, the Frobenius number cannot be expressed by closed formulas of a certain type.

For $k=3$, let $a_1<a_2<a_3$. Greenberg \cite{Greenberg} proposed a fast algorithm for computing $F(A)$ with a runtime complexity of $O(\log a_1)$.
In 1953, Popoviciu \cite{Popoviciu} developed an algorithm with a time complexity of $O(a_3\log a_3)$ for calculating $d(n; a_1, a_2, a_3)$ when $a_1, a_2, a_3$ are pairwise coprime positive integers. 
In 1995, Lison\"ek \cite{Lisonek} presented an arithmetic procedure to compute $d(n;a_1,a_2,a_3)$ in time complexity of $O(a_1a_2\log a_2)$ when $a_1,a_2,a_3$ are pairwise coprime positive integers (under certain conditions, this complexity can be reduced to $O(a_1a_2)$).
In 2003, Brown, Chou, and Shiue \cite{Brown} introduced an $O(a_1a_2\log a_3)$ algorithm for computing $d(n;a_1,a_2,a_3)$.
In 2018, Aguil\'o-Gost and Llena \cite{AguiloGost} proposed an $O(a_2+\log a_3)$ algorithm to compute $d(n;a_1,a_2,a_3)$.
In 2020, Binner \cite{Binner} developed an algorithm for computing $d(n;a_1,a_2,a_3)$ with a runtime of $O(\log a_3)$. Some relevant references can be found in \cite{GostGarcia10,Komatsu03,Nathanson}.

In this paper, we combine the constant term method mentioned in \cite{Xin15} with a key transformation technique in \cite{XinZhang} to devise a fast algorithm for calculating $d(n;a_1,a_2,a_3)$. The complexity of our algorithm is $O(\log a_2)$ with $a_1<a_2<a_3$.

The structure of this paper is as follows. 
In Section 2, we provide a brief introduction to the constant term method. 
Section 3 outlines the necessary reduction operations for our algorithm. 
Section 4 presents our algorithm and a specific example. 
Throughout the paper, $\mathbb{C}$, $\mathbb{Z}$, $\mathbb{N}$, and $\mathbb{P}$ denote the set of all complex numbers, all integers, all non-negative integers, and all positive integers, respectively.

\section{A Brief Introduction to the Constant Term Method}

In this section, we introduce two basic tools in the study of constant term theory.
To maintain brevity, we only work over the field $\mathbb{C}((\lambda))$ of Laurent series in $\lambda$.
The constant term operator $\mathrm{CT}_{\lambda}$ acts by
$$\mathop{\mathrm{CT}}\limits_{\lambda}\sum_{i=M}^{\infty}c_{i}\lambda^{i}=c_0,$$
where $M\in \mathbb{Z}$ and the coefficients $c_i\in \mathbb{C}$.
Subsequently, the denumerant is expressed as
\begin{align*}
d(n;a_1,a_2,...,a_k)=\sum_{x_i\geq 0}\mathop{\mathrm{CT}}\limits_{\lambda}\lambda^{x_1a_1+x_2a_2+\cdots +x_ka_k-n}
=\mathop{\mathrm{CT}}\limits_{\lambda}\frac{\lambda^{-n}}{(1-\lambda^{a_1})(1-\lambda^{a_2})\cdots (1-\lambda^{a_k})}.
\end{align*}
For the theory for constant terms in several variables, we refer the reader to \cite{Xin15}.

The basic problem in constant term theory is how to efficiently extract the constant term of
\begin{align}\label{e-Elambda}
E(\lambda)&=\frac{L(\lambda)}{\prod_{i=1}^k(1-z_i\lambda^{a_i})},
\end{align}
where $L(\lambda)$ is a Laurent polynomial, the variables $z_i$ are independent of $\lambda$, and $a_i\in \mathbb{P}$ for all $i$.
In the general setting, we need to clarify the series expansion of $(1-z_i \lambda^{a_i})^{-1}$ in a field of iterated Laurent series.
The situation in $\mathbb{C}((\lambda))$ is simple. For a given positive integer $k$ and complex variable $z$, we have
\begin{align*}
\frac{1}{1-z\lambda^k}= \sum_{j\geq 0} z^j\lambda^{jk};\ \ \
\frac{1}{1-z\lambda^{-k}}= \frac{-z^{-1}\lambda^{k}}{1-z^{-1}\lambda^{k}}=-\sum_{j\geq 0}z^{-(j+1)}\lambda^{(j+1)k}.
\end{align*}

Our investigation relies on the well-known partial fraction decompositions.
\begin{prop}[\cite{Xin15}]\label{propEPFD}
Let $E(\lambda)$ be defined as in \eqref{e-Elambda}, where $z_i$ are complex parameters and the denominator factors are
coprime to each other. Then $E(\lambda)$ has a partial fraction decomposition given by
\begin{equation}
E(\lambda)=P(\lambda)+\dfrac{p(\lambda)}{\lambda^m}+\sum_{i=1}^k\dfrac{A_i(\lambda)}{1-z_i\lambda^{a_i}}, \label{EE}
\end{equation}
where $P(\lambda), p(\lambda)$, and the $A_i(\lambda)$'s are all polynomials, $\deg p(\lambda)<m$, and $\deg A_{i}(\lambda)<a_i$ for all $i$.

Furthermore, the polynomial $A_i(\lambda)$ is uniquely characterized by
\begin{equation}\label{AS}
             A_i(\lambda)\equiv E(\lambda)\cdot (1-z_i\lambda^{a_i})\ \mod\langle 1-z_i\lambda^{a_i}\rangle,\ \
             \deg_{\lambda}A_i(\lambda)<a_i,
\end{equation}
where $\langle 1-z_i\lambda^{a_i}\rangle$ denotes the ideal generated by $1-z_i\lambda^{a_i}$.
\end{prop}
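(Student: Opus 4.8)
The identity \eqref{EE} is a classical partial fraction decomposition, and the plan is to carry it out over the field $K=\mathbb{C}(z_1,\ldots,z_k)$ of rational functions in the parameters (if one prefers to regard the $z_i$ as fixed complex numbers, the same argument applies verbatim over $\mathbb{C}$); note that \eqref{EE} is a purely algebraic identity of rational functions, so no choice of Laurent series expansion is involved. First I would clear the pole at the origin: since $L(\lambda)$ is a Laurent polynomial, there is a smallest $m\in\mathbb{N}$ for which $\ell(\lambda):=\lambda^mL(\lambda)\in K[\lambda]$, whence
\[
E(\lambda)=\frac{\ell(\lambda)}{\lambda^m\prod_{i=1}^k(1-z_i\lambda^{a_i})}.
\]
The denominator factors $\lambda^m,\,1-z_1\lambda^{a_1},\ldots,1-z_k\lambda^{a_k}$ are pairwise coprime in $K[\lambda]$: each $1-z_i\lambda^{a_i}$ is coprime to $\lambda^m$ because it equals $1$ at $\lambda=0$, and the $1-z_i\lambda^{a_i}$ are coprime to one another by hypothesis.

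Next I would invoke the standard partial fraction theorem over a field — or reprove it quickly by iterating Bézout's identity and division with remainder: a rational function $f/g$ with $g=g_1\cdots g_r$ and the $g_j$ pairwise coprime is uniquely $Q+\sum_{j=1}^r h_j/g_j$ with $Q\in K[\lambda]$ and $\deg h_j<\deg g_j$. Applying this with $g_1=\lambda^m$ and $g_{i+1}=1-z_i\lambda^{a_i}$ yields exactly \eqref{EE}, with $Q=P(\lambda)$, with the $\lambda^m$–term equal to $p(\lambda)/\lambda^m$ where $\deg p<m$, and with the $i$th remaining term equal to $A_i(\lambda)/(1-z_i\lambda^{a_i})$ where $\deg A_i<a_i$.

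To obtain the characterization \eqref{AS}, I would multiply \eqref{EE} by $1-z_i\lambda^{a_i}$ and read the resulting identity in the quotient ring $R_i:=K[\lambda]/\langle 1-z_i\lambda^{a_i}\rangle$. In $R_i$ the class of $\lambda$ is a unit (with inverse $z_i\lambda^{a_i-1}$) and, by coprimality, each $1-z_j\lambda^{a_j}$ with $j\neq i$ is a unit; hence $E(\lambda)(1-z_i\lambda^{a_i})=\ell(\lambda)\big/\big(\lambda^m\prod_{j\neq i}(1-z_j\lambda^{a_j})\big)$ is a well-defined element of $R_i$. Every other term of \eqref{EE}, after multiplication by $1-z_i\lambda^{a_i}$, lies in $\langle 1-z_i\lambda^{a_i}\rangle$ and so vanishes in $R_i$; therefore $A_i(\lambda)\equiv E(\lambda)(1-z_i\lambda^{a_i})$ in $R_i$. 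Since the polynomials of degree $<a_i$ form a complete set of coset representatives for $R_i$, this congruence together with $\deg A_i<a_i$ determines $A_i$ uniquely, and uniqueness of the full decomposition follows the same way.

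The only point requiring care is the bookkeeping about \emph{where} each manipulation lives: one must work over the field $K$ so that Bézout and division-with-remainder are available, and one must verify that $E(\lambda)(1-z_i\lambda^{a_i})$ genuinely defines an element of the finite-dimensional $K$-algebra $R_i$, i.e.\ that every denominator surviving the cancellation of $1-z_i\lambda^{a_i}$ becomes invertible modulo $1-z_i\lambda^{a_i}$. Once the coprimality hypotheses are used to secure those invertibilities, the remainder of the argument is routine.
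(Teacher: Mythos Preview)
The paper does not actually prove this proposition: it is quoted from \cite{Xin15} and stated without proof, so there is no in-paper argument to compare against. Your proposal is correct and is precisely the standard partial-fraction route one would expect---clearing the pole at $\lambda=0$, invoking pairwise coprimality of $\lambda^m$ and the $1-z_i\lambda^{a_i}$, applying the usual PFD over the field $K$, and then reading off the characterization \eqref{AS} by reducing modulo $1-z_i\lambda^{a_i}$; the care you take in checking invertibility in $R_i$ is exactly what is needed.
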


The following corollary is straightforward but important.
\begin{cor}\label{e-CTECoro}
Follow the notation in Proposition \ref{propEPFD}. We obtain
\begin{equation}\label{e-CTE}
  \CT_\lambda E(\lambda) = P(0) + A_1(0)+A_2(0)+\cdots+A_k(0).
\end{equation}
Moreover, if $E(\lambda)$ is a proper fraction in $\lambda$, that is, the degree of the numerator is less than the degree in the denominator,
then $P(\lambda)=0$; if $\lim_{\lambda=0} E(\lambda)$ exists, then $p(\lambda)=0$. If both conditions hold, then
$$E(0)=A_1(0)+A_2(0)+\cdots+A_k(0).$$
\end{cor}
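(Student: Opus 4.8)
The plan is to apply the operator $\CT_\lambda$ term by term to the partial fraction decomposition \eqref{EE} furnished by Proposition \ref{propEPFD}, using the explicit expansions of $(1-z\lambda^{k})^{-1}$ in $\mathbb{C}((\lambda))$ recorded before the proposition. The whole argument is a matter of degree bookkeeping; I do not expect any genuine obstacle, only the need to check each of the three types of summands and then read off the extra vanishing statements.

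First I would treat the three pieces of \eqref{EE} separately. Since $P(\lambda)$ is a polynomial, $\CT_\lambda P(\lambda)=P(0)$ at once. For $p(\lambda)/\lambda^m$, the hypothesis $\deg p(\lambda)<m$ says that $p(\lambda)/\lambda^m$ is a $\mathbb{C}$-linear combination of $\lambda^{-m},\dots,\lambda^{-1}$, hence contributes nothing to the constant term. For each $A_i(\lambda)/(1-z_i\lambda^{a_i})$, expand $(1-z_i\lambda^{a_i})^{-1}=\sum_{j\ge 0}z_i^{\,j}\lambda^{ja_i}$; because $\deg A_i(\lambda)<a_i$, every term with $j\ge 1$ produces only monomials of degree $\ge a_i>0$, so the constant term comes solely from the $j=0$ summand $A_i(\lambda)$, giving $\CT_\lambda\bigl(A_i(\lambda)/(1-z_i\lambda^{a_i})\bigr)=A_i(0)$. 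Summing these three computations yields \eqref{e-CTE}.

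For the ``moreover'' part I would argue by looking at the behavior of \eqref{EE} at $\lambda=\infty$ and at $\lambda=0$. If $E(\lambda)$ is a proper fraction, then $E(\lambda)\to 0$ as $\lambda\to\infty$; in \eqref{EE} the term $p(\lambda)/\lambda^m\to 0$ (as $\deg p<m$) and each $A_i(\lambda)/(1-z_i\lambda^{a_i})\to 0$ (as $\deg A_i<a_i$), so the polynomial $P(\lambda)$ must tend to $0$ at infinity, forcing $P(\lambda)=0$. If $\lim_{\lambda=0}E(\lambda)$ exists, then $E$ is regular at $\lambda=0$; the terms $P(\lambda)$ and $A_i(\lambda)/(1-z_i\lambda^{a_i})$ are already regular there (each denominator equals $1$ at $\lambda=0$), so $p(\lambda)/\lambda^m$ is regular at $0$, and since it is a combination of strictly negative powers of $\lambda$ this forces $p(\lambda)=0$. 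When both hypotheses hold we have $P(\lambda)=0$, and regularity of $E$ at $0$ makes its constant term equal to its value there, so $E(0)=\CT_\lambda E(\lambda)=A_1(0)+\cdots+A_k(0)$.

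The only points requiring a little care are that the series expansions used are precisely those valid in $\mathbb{C}((\lambda))$, so that $\CT_\lambda$ is well defined on each summand of \eqref{EE}, and the observation that ``proper fraction'' and ``regular at $0$'' are exactly the two conditions that kill $P(\lambda)$ and $p(\lambda)$ respectively; neither is a real difficulty.
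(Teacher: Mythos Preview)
Your proof is correct. The paper does not actually give a proof of this corollary at all---it simply labels it ``straightforward but important'' and states it without argument---so your term-by-term analysis of the decomposition \eqref{EE}, together with the degree checks at $\lambda=\infty$ and $\lambda=0$, is exactly the kind of routine verification the authors had in mind.
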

It is important to note that in the multivariate setting, certain terms $A_i(0)$ may be omitted from \eqref{e-CTE}. 
For the basic building block $A_s(0)$, we introduce the notation
$$\mathop{\mathrm{CT}}\limits_{\lambda}\underline{\frac{1}{1-z_s\lambda^{a_s}}}E(\lambda)\cdot(1-z_s\lambda^{a_s})=A_s(0).$$
One can think that only the single underlined factor of the denominator contributes when taking the constant term in $\lambda$.
Define the \emph{type} of $\CT_\lambda E(\lambda)$ as $(a_1,\dots, a_k)$ and the \emph{type} of $\CT_\lambda E(\lambda)$ with $1-z_s\lambda^{a_s} $ underlined
as $(\ci s; a_1,\dots, a_k)$. The \emph{index} of the latter constant term is defined as $a_s$.

We require an additional tool, as stated in the following theorem.
\begin{thm}[\cite{XinZhang},The key transformation]\label{keyTR}
Let $\gcd(u,a)=1$.
For any rational function $F(\lambda)$ that is meaningful when modulo $1-z\lambda^a$, we have
$$\mathop{\mathrm{CT}}\limits_{\lambda}\underline{\frac{1}{1-z\lambda^a}}F(\lambda)
=\mathop{\mathrm{CT}}\limits_{\lambda}\underline{\frac{1}{1-z^{1/u}\lambda^a}}F(\lambda^u),$$
where $u$ is referred to as a \emph{multiplier}.
\end{thm}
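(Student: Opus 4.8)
\quad The plan is to recast each side of the identity as a $K$-linear functional ($\varepsilon_z$ on the left, $\varepsilon_w$ on the right) applied inside a quotient ring ($R_z$, resp.\ $R_w$), to build an explicit isomorphism $\psi\colon R_z\to R_w$ induced by $\lambda\mapsto\lambda^{u}$, and then to verify that $\psi$ intertwines both the functionals and the two reductions of $F$.

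First I would unwind the notation from the paragraph after Corollary~\ref{e-CTECoro}. Fix a $u$-th root $w:=z^{1/u}$ and let $K$ be a coefficient field containing $z$ and $w$ (for instance $K=\mathbb{C}(w)$ with $z:=w^{u}$). Set $R_z:=K[\lambda]/\langle 1-z\lambda^{a}\rangle$ and $R_w:=K[\lambda]/\langle 1-w\lambda^{a}\rangle$; since $1-z\lambda^{a}$ has degree $a$ and nonzero leading coefficient, $R_z$ is a free $K$-module with basis $1,\lambda,\dots,\lambda^{a-1}$, and similarly for $R_w$. By Proposition~\ref{propEPFD} and the uniqueness statement \eqref{AS}, for a rational function $F$ meaningful modulo $1-z\lambda^{a}$ the quantity $\CT_{\lambda}\underline{\frac{1}{1-z\lambda^{a}}}F(\lambda)$ equals $\varepsilon_z(\overline F)$, where $\overline F\in R_z$ is the reduction of $F$ (its denominator being a unit in $R_z$ by hypothesis) and $\varepsilon_z\colon R_z\to K$ reads off the coefficient of $\lambda^{0}$ in the above basis; likewise the right-hand side equals $\varepsilon_w\bigl(\overline{F(\lambda^{u})}\bigr)$, computed in $R_w$.

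Next I would construct $\psi$. Since $(\lambda^{u})^{a}=w^{-u}=z^{-1}$ in $R_w$, the element $\lambda^{u}\in R_w$ satisfies $1-z(\lambda^{u})^{a}=0$, so $\lambda\mapsto\lambda^{u}$ extends to a $K$-algebra homomorphism $\psi\colon R_z\to R_w$. It is an isomorphism: using $\gcd(u,a)=1$, choose $u'$ with $uu'=1+ka$ for some $k\in\Z$; then $(\lambda^{u})^{u'}=\lambda^{1+ka}=w^{-k}\lambda$ in $R_w$, so $\lambda\mapsto w^{k}\lambda^{u'}$ is a two-sided inverse. Since $\psi$ is a ring homomorphism and $F$ is meaningful modulo $1-z\lambda^{a}$, applying $\psi$ to a numerator/denominator presentation of $F$ gives $\psi(\overline F)=\overline{F(\lambda^{u})}$ in $R_w$ (in particular $F(\lambda^{u})$ is meaningful modulo $1-w\lambda^{a}$).

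The crux is the compatibility $\varepsilon_w\circ\psi=\varepsilon_z$. For $0\le i<a$ we have $\psi(\lambda^{i})=\lambda^{ui}=w^{-\lfloor ui/a\rfloor}\,\lambda^{\,ui\bmod a}$ in $R_w$; as $i$ runs over $\{0,1,\dots,a-1\}$ the residues $ui\bmod a$ permute $\{0,1,\dots,a-1\}$ (again because $\gcd(u,a)=1$), and $ui\equiv 0\pmod a$ holds only for $i=0$, where $\psi(1)=1$. Hence for $\overline g=\sum_{i=0}^{a-1}c_{i}\lambda^{i}\in R_z$ the coefficient of $\lambda^{0}$ in $\psi(\overline g)$ is exactly $c_{0}$, i.e.\ $\varepsilon_w(\psi(\overline g))=\varepsilon_z(\overline g)$. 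Combining the three steps,
\begin{align*}
\CT\limits_{\lambda}\underline{\frac{1}{1-z^{1/u}\lambda^{a}}}F(\lambda^{u})
&=\varepsilon_w\bigl(\overline{F(\lambda^{u})}\bigr)=\varepsilon_w\bigl(\psi(\overline F)\bigr)\\
&=\varepsilon_z(\overline F)=\CT\limits_{\lambda}\underline{\frac{1}{1-z\lambda^{a}}}F(\lambda).
\end{align*}
The step I expect to demand the most care is this last one: insisting that $\varepsilon_z$ really computes the coefficient of $\lambda^{0}$ of the unique degree-$<a$ representative (so that the underlined $\CT$ is literally this functional), checking that $\psi$ does not leak the other basis coefficients into the $\lambda^{0}$-slot, and pinning down once and for all the meaning of $z^{1/u}$ so that both sides live in a common field and the denominators stay units after applying $\psi$.
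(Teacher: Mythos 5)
The paper does not give its own proof of Theorem~\ref{keyTR}: it is stated as a citation to \cite{XinZhang}, so there is nothing internal to compare against. Judged on its own, your proof is correct and uses the natural conceptual framing. You correctly unwind the underline notation via Proposition~\ref{propEPFD}/\eqref{AS} as the $\lambda^0$-coefficient functional $\varepsilon_z$ on $R_z=K[\lambda]/\langle 1-z\lambda^a\rangle$; the verification that $\lambda\mapsto\lambda^u$ gives a well-defined $K$-algebra map $\psi\colon R_z\to R_w$ (because $(\lambda^u)^a=w^{-u}=z^{-1}$ in $R_w$), that it is invertible via $\lambda\mapsto w^k\lambda^{u'}$ with $uu'=1+ka$, and that $\varepsilon_w\circ\psi=\varepsilon_z$ because $i\mapsto ui\bmod a$ permutes $\{0,\dots,a-1\}$ with $0$ fixed (and $\psi(1)=1$) are all sound. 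Two small points worth making explicit if this were written up in full: first, since the paper's examples use negative multipliers $u$, one should note that $\lambda$ is a unit in both quotients (so $\lambda^u$ makes sense for $u<0$ and $\psi$ still lands where claimed); second, the ``meaningful modulo'' hypothesis is exactly what guarantees the denominator of $F$ is a unit in $R_z$, and since $\psi$ is an isomorphism its image stays a unit in $R_w$, which is the content of your parenthetical that $F(\lambda^u)$ is meaningful modulo $1-w\lambda^a$ — you have both pieces, just keep them visibly tied to the hypothesis.
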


For denumerant $d(n;a_1,a_2,a_3)$, we need to introduce slack variables $z_i$ to apply Proposition \ref{propEPFD}. We have 
\begin{align*}
&d(n;a_1,a_2,a_3)
\\=&\lim_{z_i\rightarrow 1}\left\{ \mathop{\mathrm{CT}}\limits_{\lambda}\frac{\lambda^{-n}}{\underline{(1-\lambda^{a_1}z_1)}(1-\lambda^{a_2}z_2)(1-\lambda^{a_3}z_3)}
+\mathop{\mathrm{CT}}\limits_{\lambda}\frac{\lambda^{-n}}{(1-\lambda^{a_1}z_1)\underline{(1-\lambda^{a_2}z_2)}(1-\lambda^{a_3}z_3)}\right. 
\\&\left. +\mathop{\mathrm{CT}}\limits_{\lambda}\frac{\lambda^{-n}}{(1-\lambda^{a_1}z_1)(1-\lambda^{a_2}z_2)\underline{(1-\lambda^{a_3}z_3)}}\right\}
\end{align*}
or (simply written as)
\begin{align}\label{DenumConbTT}
d(n;a_1,a_2,a_3)&=\lim_{z_i\rightarrow 1}\mathop{\mathrm{CT}}\limits_{\lambda}\frac{\lambda^{-n}}{\underline{(1-\lambda^{a_1}z_1)(1-\lambda^{a_2}z_2)(1-\lambda^{a_3}z_3)}}.
\end{align}

The flow chart of our algorithm is as follows.
\begin{small}
\begin{align}\label{FlowChart}
&(\ci 1;a_1,a_2,a_3)\nonumber
\\&\quad\quad\quad\downarrow \nonumber
\\&(\ci 1;a_{11},1,a_{31})\longrightarrow (\ci 1;a_{12},1,a_{32})\longrightarrow \cdots \longrightarrow (\ci 1;a_{1(p-1)},1,a_{3(p-1)})\longrightarrow {\boxed{(\textcircled{1};a_{1p},1,a_{3p})}}
\\&\quad\quad\quad\downarrow \quad\quad\quad\quad\quad\quad\quad\quad \downarrow \quad\quad\quad\quad\quad\quad\quad\quad\quad\quad\quad\quad\quad \downarrow\nonumber
\\ &{\boxed{(\textcircled{2};a_{11},1,a_{31})}}\ \ \ \ \ {\boxed{(\textcircled{2};a_{12},1,a_{32})}}\ \ \ \ \ \cdots \ \ \ \ \ {\boxed{(\textcircled{2};a_{1(p-1)},1,a_{3(p-1)})}} \nonumber
\end{align}
\end{small}

Our algorithm is based on the following observations of constant term of type $(\ci 1;a_1,a_2,a_3)$.
If $\gcd(a_1,a_2)=1$, then a constant term of type $(\ci 1;a_{1},a_{2},a_{3})$ can be transformed into
a constant term of type $(\ci 1; a_{11},1,a_{31})$ by using Theorem \ref{keyTR}, where $a_{11}=a_1$.

A constant term of type $(\ci 1; a_{1i},1,a_{3i})$, for $1\leq i\leq p-1$,  can be expressed as the sum of two distinct constant terms:
One is of type $(\ci 2; a_{1i},1,a_{3i})$ and hence simplifies to a rational function;
The other is of type $(\ci 3; a_{1i}^{\prime},1,a_{3i}^{\prime})$ and can be written as a constant term of
type $(\ci 1; a_{1(i+1)},1,a_{3(i+1)})$ with index $a_{1(i+1)}=a_{3i}^{\prime} \leq a_{1i}/2$.

In the flow chart \eqref{FlowChart}, $a_{1p}=0$ or $1$ with $p\leq \log a_1$. When $a_{1p}=0$, the constant term of type $(\ci 1;a_{1p},1,a_{3p})$ is $0$ (see Subsection \ref{RecurOper}). When $a_{1p}=1$, the constant term of type $(\ci 1;a_{1p},1,a_{3p})$ is a single rational function.

\section{Several Reductions}

Let $a, b, c$ be positive integers such that $a < b < c$ and $\gcd(a, b, c) = 1$. We derive several reductions needed in our algorithms.

\subsection{Reduction to the Case $\gcd(a,b)=1$}
Though not used in our algorithm, we can reduce $(a,b,c)$ to the pairwise coprime case by the following lemma.

\begin{lem}\label{abprime}
Let $F(\lambda)$ be a formal power series in $\lambda$ with coefficients in the field $\mathbb{C}$.
Let $n\in \mathbb{N}$ and $g,c\in \mathbb{P}$. If $\gcd (g,c)=1$, then
$$\mathop{\mathrm{CT}}\limits_{\lambda}\left(F(\lambda^{g})\cdot\frac{\lambda^{-n}}{(1-\lambda^{c})}\right)
=\mathop{\mathrm{CT}}\limits_{\lambda}\left(F(\lambda)\cdot\frac{\lambda^{\frac{-n+tc}{g}}}{(1-\lambda^{c})}\right),$$
where $c^{\prime}c\equiv 1\pmod g$, $t\equiv nc^{\prime} \pmod g$, and $0\leq t<g$.
\end{lem}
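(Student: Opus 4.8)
The plan is to prove the identity by expanding both sides as Laurent series in $\lambda$ and comparing the coefficient of $\lambda^0$ directly, since both $\CT_\lambda$'s are nothing but coefficient extractions. First I would write $F(\lambda)=\sum_{i\ge0}f_i\lambda^i$ with $f_i\in\mathbb{C}$ and use $\frac{1}{1-\lambda^c}=\sum_{j\ge0}\lambda^{cj}$ in $\mathbb{C}((\lambda))$. Then
$$F(\lambda^{g})\cdot\frac{\lambda^{-n}}{1-\lambda^{c}}=\sum_{i,j\ge0}f_i\,\lambda^{gi+cj-n},$$
so that $\CT_\lambda$ of the left-hand side equals $\sum_{i,j\ge0,\ gi+cj=n}f_i$.

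Next I would check that the exponent on the right is a genuine integer. From $c'c\equiv1\pmod g$ and $t\equiv nc'\pmod g$ we get $tc\equiv nc'c\equiv n\pmod g$, hence $g\mid(tc-n)$; set $m:=(n-tc)/g\in\Z$. The same expansion gives
$$F(\lambda)\cdot\frac{\lambda^{(-n+tc)/g}}{1-\lambda^{c}}=\sum_{i,j\ge0}f_i\,\lambda^{\,i+cj-m},$$
so $\CT_\lambda$ of the right-hand side equals $\sum_{i,j\ge0,\ i+cj=m}f_i$.

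It then remains to match the two sums, i.e.\ to show that the sets of admissible indices $i$ coincide. On the left, $gi+cj=n$ forces $cj\equiv n\pmod g$; since $\gcd(c,g)=1$ this is equivalent to $j\equiv nc'\equiv t\pmod g$, so every admissible $j$ has the form $j=t+gk$ with $k\ge0$, and then $i=(n-cj)/g=m-ck$, while the constraints $i\ge0$, $j\ge0$ reduce to $0\le ck\le m$ (in particular both sides are empty when $m<0$). On the right, $i+cj=m$ with $i,j\ge0$ gives $i=m-cj$ with $0\le cj\le m$. In both cases $i$ runs over exactly $\{\,m-ck:\ k\in\N,\ ck\le m\,\}$, so the two sums are identical, which proves the lemma.

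I do not expect a genuine obstacle here: the only delicate point is the reindexing $j\mapsto t+gk$ together with the careful tracking of the nonnegativity constraints and the degenerate case $m<0$, which is elementary modular bookkeeping rather than a real difficulty.
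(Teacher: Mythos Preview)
Your proof is correct. You expand both sides coefficientwise and exhibit a bijection between the index sets $\{(i,j):gi+cj=n,\ i,j\ge0\}$ and $\{(i,j'):i+cj'=m,\ i,j'\ge0\}$ via $j=t+gk\leftrightarrow j'=k$, which is clean and self-contained; the check that $m=(n-tc)/g\in\Z$ and the handling of $m<0$ are both fine.

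The paper takes a slightly different, more generating-function-flavoured route: it rewrites
\[
\frac{1}{1-\lambda^{c}}=\frac{1+\lambda^{c}+\cdots+\lambda^{(g-1)c}}{1-\lambda^{cg}},
\]
so that the whole expression becomes a Laurent series in $\lambda^{g}$ multiplied by $\lambda^{-n}(1+\lambda^c+\cdots+\lambda^{(g-1)c})$. Since $\gcd(c,g)=1$, the $g$ exponents $-n,-n+c,\dots,-n+(g-1)c$ hit each residue class modulo $g$ exactly once, and only the term $\lambda^{-n+tc}$ with $-n+tc\equiv0\pmod g$ can contribute to the constant term; one then substitutes $\lambda^{g}\mapsto\lambda$. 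Your argument is essentially the unpacked, coefficient-level version of this: where the paper isolates the unique surviving summand in the finite geometric factor, you isolate the unique residue class of $j$ modulo $g$. The paper's version stays closer to the constant-term formalism used throughout and avoids writing out the double sums, while yours is arguably more transparent for a reader unfamiliar with that formalism.
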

\begin{proof}
We have
\begin{align*}
\mathop{\mathrm{CT}}\limits_{\lambda}\frac{F(\lambda^{g})\cdot\lambda^{-n}}{(1-\lambda^{c})}
=\mathop{\mathrm{CT}}\limits_{\lambda}\frac{F(\lambda^{g})\cdot\lambda^{-n}\cdot(1+\lambda^{c}+\cdots +\lambda^{(g-1)c})}{(1-\lambda^{cg})}.
\end{align*}
By $\gcd(c,g)=1$ and $-n+tc\equiv 0\pmod g$, we deduce that $t\equiv c^{-1}n\pmod g$, leading to
\begin{align*}
\mathop{\mathrm{CT}}\limits_{\lambda}\frac{F(\lambda^{g})\cdot\lambda^{-n}\cdot(1+\lambda^{c}+\cdots +\lambda^{(g-1)c})}{(1-\lambda^{cg})}
=\mathop{\mathrm{CT}}\limits_{\lambda}\frac{F(\lambda^{g})\cdot\lambda^{-n}\cdot \lambda^{tc}}{(1-\lambda^{cg})}
=\mathop{\mathrm{CT}}\limits_{\lambda}\frac{F(\lambda)\cdot\lambda^{\frac{-n+tc}{g}}}{(1-\lambda^{c})}.
\end{align*}
This concludes the proof.
\end{proof}

Suppose
\begin{align*}
&g_1=\gcd(b,c),\ \ \ a_1a\equiv 1\mod g_1,\ \ \ k\equiv na_1 \mod g_1,\ \ 0\leq k<g_1,
\\&g_2=\gcd(a,c),\ \ \ b_2b\equiv 1\mod g_2,\ \ \ j\equiv nb_2 \mod g_2,\ \ 0\leq j<g_2,
\\&g_3=\gcd(a,b),\ \ \ c_3c\equiv 1\mod g_3,\ \ \ i\equiv nc_3 \mod g_3,\ \ 0\leq i<g_3.
\end{align*}
By applying Lemma \ref{abprime} iteratively, we obtain 
\begin{align}
d(n;a,b,c)&=\mathop{\mathrm{CT}}\limits_{\lambda}\frac{\lambda^{-n}}{(1-\lambda^{a})(1-\lambda^{b})(1-\lambda^{c})}
=d\left(\frac{n-ic}{g_3};\frac{a}{g_3},\frac{b}{g_3},c\right)\label{PrimeToAB}
\\&=d\left(\frac{n-ic-jb}{g_2g_3};\frac{a}{g_2g_3},\frac{b}{g_3},\frac{c}{g_2}\right)
=d\left(\frac{n-ic-jb-ka}{g_1g_2g_3};\frac{a}{g_2g_3},\frac{b}{g_1g_3},\frac{c}{g_1g_2}\right).\label{PireWsiePrime}
\end{align}

\subsection{Reduction to Two Contribution Term}\label{ReduceTwoTerm}

For \eqref{DenumConbTT}, we consider the contribution of $(1-\lambda^{c}z_3)$ to the constant term. There exists $s\in\mathbb{P}$ such that $0<sc-n\leq c$.
We have
\begin{align*}
\mathop{\mathrm{CT}}\limits_{\lambda}\frac{\lambda^{-n}}{(1-\lambda^{a}z_1)(1-\lambda^{b}z_2)\underline{(1-\lambda^{c}z_3)}}
&=\mathop{\mathrm{CT}}\limits_{\lambda}\frac{\lambda^{-n}(\lambda^cz_3)^s}{(1-\lambda^{a}z_1)
(1-\lambda^{b}z_2)\underline{(1-\lambda^{c}z_3)}}
\\&=\mathop{\mathrm{CT}}\limits_{\lambda}\frac{-\lambda^{sc-n}z_3^s}{\underline{(1-\lambda^{a}z_1)
(1-\lambda^{b}z_2)}(1-\lambda^{c}z_3)},
\end{align*}
where the last equation follows from Corollary \ref{e-CTECoro}.

Hence, we derive
\begin{align}\label{FirstSecond}
d(n;a,b,c)&=\lim_{z_i\rightarrow 1}\left(\mathop{\mathrm{CT}}\limits_{\lambda}\frac{\lambda^{-n}-\lambda^{sc-n}z_3^s}{\underline{(1-\lambda^{a}z_1)}
(1-\lambda^{b}z_2)(1-\lambda^{c}z_3)}\nonumber
+\mathop{\mathrm{CT}}\limits_{\lambda}\frac{\lambda^{-n}-\lambda^{sc-n}z_3^s}{(1-\lambda^{a}z_1)
\underline{(1-\lambda^{b}z_2)}(1-\lambda^{c}z_3)}\right)
\\&=\lim_{z_i\rightarrow 1}(T(a)+T(b)).
\end{align}

For the reduction from $(a,b)$ to $(a/g_3, b/g_3)$, it is sufficient to perform the extended gcd computation once.
\begin{equation}\label{e-uv}
\gcd(a,b)=g_3 \Rightarrow   au+bv=g_3 \Leftrightarrow u a/g_3+ v b/g_3 =1,  \text{where } u,v\in \mathbb{Z}\setminus \{0\}.
\end{equation} 
In the subsequent subsections, we assume that $a:=\frac{a}{g_3}$ and $b:=\frac{b}{g_3}$, or simply $\gcd(a,b)=1$.

\subsection{Reduction to the Case $b=1$}
If $\gcd(a,b)=1$, then we can reduce a constant term of type $(\ci 1; a,b,c)$ to a constant term of type
$(\ci 1; a,1,c')$. This handles the constant term $T(a)$ in \eqref{FirstSecond}. The constant term $T(b)$ is similar.

Consider a constant term of the following form:
\begin{align*}
\mathop{\mathrm{CT}}\limits_{\lambda}\frac{\lambda^{r_1}\mathbf{z^{m_1}}
-\lambda^{r_2}\mathbf{z^{m_2}}}{\underline{(1-\lambda^{a}z_1)}(1-\lambda^{b}z_2)(1-\lambda^{c}z_3)},
\end{align*}
where $r_1, r_2$ are integers and $\mathbf{z^{m_i}}=z_1^{m_{i1}}z_2^{m_{i2}}z_3^{m_{i3}}$, $m_{ij}\in \mathbb{Q}$. Applying \eqref{e-uv}, we find $u,v$ such that $ua+vb=1$. 
It follows that $\gcd(a,v)=1$, so that Theorem \ref{keyTR} applies to yield
\begin{align}
&\mathop{\mathrm{CT}}\limits_{\lambda}\frac{\lambda^{r_1}\mathbf{z^{m_1}}
-\lambda^{r_2}\mathbf{z^{m_2}}}{\underline{(1-\lambda^{a}z_1)}(1-\lambda^{b}z_2)(1-\lambda^{c}z_3)}
=\mathop{\mathrm{CT}}\limits_{\lambda}
\frac{\lambda^{r_1v}\mathbf{z^{m_1}}-\lambda^{r_2v}\mathbf{z^{m_2}}}{\underline{(1-\lambda^{a}z_1^{\frac{1}{v}})}
(1-\lambda^{bv}z_2)(1-\lambda^{cv}z_3)}\nonumber
\\=&\mathop{\mathrm{CT}}\limits_{\lambda}\frac{\lambda^{r_1v}\mathbf{z^{m_1}}
-\lambda^{r_2v}\mathbf{z^{m_2}}}{\underline{(1-\lambda^{a}z_1^{\frac{1}{v}})}
(1-\lambda^{1-au}z_2)(1-\lambda^{cv}z_3)}
=\mathop{\mathrm{CT}}\limits_{\lambda}\frac{\lambda^{r_1v}\mathbf{z^{m_1}}
-\lambda^{r_2v}\mathbf{z^{m_2}}}{\underline{(1-\lambda^{a}z_1^{\frac{1}{v}})}
(1-\lambda z_1^{\frac{u}{v}}z_2)(1-\lambda^{cv}z_3)}.\label{ToZZNeed11}
\end{align}

\subsection{A Euclid Style Recursive Operation}\label{RecurOper}

We aim to establish a recursion for $T(a)$ and $T(b)$. For this, we relax to consider the following constant term:
\begin{align}
F(\underline{\lambda^{a}\mathbf{z^{\alpha}}}, \lambda\mathbf{z^{\gamma}},  \lambda^{c}\mathbf{z^{\beta}},
\lambda^{r_1}\mathbf{z^{m_1}},\lambda^{r_2}\mathbf{z^{m_2}})
=\mathop{\mathrm{CT}}\limits_{\lambda}\frac{\lambda^{r_1}\mathbf{z^{m_1}}
-\lambda^{r_2}\mathbf{z^{m_2}}}{\underline{(1-\lambda^{a}\mathbf{z^{\alpha}})}
(1-\lambda\mathbf{z^{\gamma}})(1-\lambda^{c}\mathbf{z^{\beta}})},\label{ToZZNeed22}
\end{align}
where $r_1, r_2\in \mathbb{Z}$ and $a, c\in \mathbb{N}$. 
The base cases are: i) if $a=0$, then the constant term is just $0$;
ii) if $a=1$, then the constant term becomes (according to \eqref{AS})
\begin{align*}
\frac{\mathbf{z}^{\mathbf{m_1}-r_1\alpha}
-\mathbf{z}^{\mathbf{m_2}-r_2\alpha}}{(1-\mathbf{z}^{\gamma-\alpha})(1-\mathbf{z}^{\beta-c\alpha})}.
\end{align*}

We require the following two elementary operations.

The \emph{remainder} $\mathrm{rem}^{\star}(\lambda^m,1-\mathbf{z^h}\lambda^a,\lambda)$ and the \emph{signed remainder} $\mathrm{srem}^{\star}(\lambda^m,1-\mathbf{z^h}\lambda^a,\lambda)$ \cite{Xin15} of $\lambda^m$ when dividing by $1-\mathbf{z^h}\lambda^a$ are defined as follows
(different from the usual remainder):
\begin{align*}
\mathrm{rem}^{\star}(\lambda^m,1-\mathbf{z^h}\lambda^a,\lambda)&=\mathbf{z}^{\mathbf{h}\cdot (-\ell)}\lambda^r,\ \ \text{where}\ m=\ell a+r,\ 0< r\leq a;
\\ \mathrm{srem}^{\star}(\lambda^m,1-\mathbf{z^h}\lambda^a,\lambda)&=\mathbf{z}^{\mathbf{h}\cdot (-\ell)}\lambda^r,\ \ \text{where}\ m=\ell a+r,\ -\frac{a}{2}< r\leq \frac{a}{2}.
\end{align*}

\begin{lem}\label{LongDiven}
We have the following recursion:
\begin{align}\label{FRation-and-F}
&F(\underline{\lambda^{a}\mathbf{z^{\alpha}}}, \lambda\mathbf{z^{\gamma}},  \lambda^{c}\mathbf{z^{\beta}},
\lambda^{r_1}\mathbf{z^{m_1}},\lambda^{r_2}\mathbf{z^{m_2}})\nonumber
\\=&\frac{\mathbf{z}^{\mathbf{m_2}^{\prime}-t_2\gamma}-\mathbf{z}^{\mathbf{m_1}^{\prime}-t_1\gamma}}{(1-\mathbf{z}^{\alpha-a\gamma})
(1-\mathbf{z}^{\beta_2-c_2\gamma})}+F(\underline{\lambda^{c_2}\mathbf{z^{\beta_2}}}, \lambda\mathbf{z^{\gamma}},  \lambda^{a}\mathbf{z^{\alpha}},\lambda^{t_2}\mathbf{z^{m_2}}^{\prime},\lambda^{t_1}\mathbf{z^{m_1}}^{\prime}),
\end{align}
where the new parameters are determined through several elementary operations. 
\end{lem}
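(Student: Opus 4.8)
The plan is to recognize $F(\underline{\lambda^{a}\mathbf{z^{\alpha}}},\lambda\mathbf{z^{\gamma}},\lambda^{c}\mathbf{z^{\beta}},\lambda^{r_1}\mathbf{z^{m_1}},\lambda^{r_2}\mathbf{z^{m_2}})$ as the single partial-fraction coefficient attached to the underlined factor $1-\lambda^{a}\mathbf{z^{\alpha}}$, and then to relocate that underline onto a factor of index at most $a/2$ by Corollary~\ref{e-CTECoro}. Throughout I work over $\mathbb{C}(\mathbf{z})$ with the slack variables treated as independent, so that at each step the denominator factors are pairwise coprime and Proposition~\ref{propEPFD} applies; the mild non-degeneracies (e.g.\ $a\gamma\neq\alpha$) hold generically, \eqref{FRation-and-F} is to be read as an identity of rational functions in $\mathbf{z}$, and the specialization $\mathbf{z}\to1$ is performed only at the very end. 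We may assume $a\geq2$, the cases $a=0,1$ being the stated base cases. By~\eqref{AS}, $F=A_1(0)$, where $A_1(\lambda)$ is the unique polynomial with $\deg_{\lambda}A_1<a$ and $A_1(\lambda)\equiv\dfrac{\lambda^{r_1}\mathbf{z^{m_1}}-\lambda^{r_2}\mathbf{z^{m_2}}}{(1-\lambda\mathbf{z^{\gamma}})(1-\lambda^{c}\mathbf{z^{\beta}})}\mod\langle1-\lambda^{a}\mathbf{z^{\alpha}}\rangle$.

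Since $1-\lambda\mathbf{z^{\gamma}}$ and $1-\lambda^{c}\mathbf{z^{\beta}}$ are units modulo $\langle1-\lambda^{a}\mathbf{z^{\alpha}}\rangle$, I may replace $\lambda^{c}\mathbf{z^{\beta}}$ by $\mathbf{z^{\beta}}\,\mathrm{srem}^{\star}(\lambda^{c},1-\lambda^{a}\mathbf{z^{\alpha}},\lambda)=\lambda^{c_2}\mathbf{z^{\beta_2}}$, with $|c_2|\leq a/2$, and each $\lambda^{r_i}\mathbf{z^{m_i}}$ by $\mathbf{z^{m_i}}\,\mathrm{rem}^{\star}(\lambda^{r_i},1-\lambda^{a}\mathbf{z^{\alpha}},\lambda)=\lambda^{t_i}\mathbf{z}^{\mathbf{m_i}^{\prime}}$, with $0<t_i\leq a$, without changing the residue and hence $A_1$; when $c_2\leq0$ I first rewrite $\tfrac{1}{1-\lambda^{c_2}\mathbf{z^{\beta_2}}}=\tfrac{-\lambda^{-c_2}\mathbf{z}^{-\beta_2}}{1-\lambda^{-c_2}\mathbf{z}^{-\beta_2}}$ and absorb the monomial into the numerator. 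These are the ``several elementary operations'' of the statement, and they produce the output data $c_2,\beta_2,t_1,t_2,\mathbf{m_1}^{\prime},\mathbf{m_2}^{\prime}$. After them, $A_1$ is exactly the coefficient at $1-\lambda^{a}\mathbf{z^{\alpha}}$ in the partial-fraction decomposition of
$$H(\lambda):=\frac{\lambda^{t_1}\mathbf{z}^{\mathbf{m_1}^{\prime}}-\lambda^{t_2}\mathbf{z}^{\mathbf{m_2}^{\prime}}}{(1-\lambda^{a}\mathbf{z^{\alpha}})(1-\lambda\mathbf{z^{\gamma}})(1-\lambda^{c_2}\mathbf{z^{\beta_2}})}.$$

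Now I check that $H$ is a proper fraction in $\lambda$, regular at $\lambda=0$, with $H(0)=0$: after the normalizations the numerator of $H$ is an ordinary polynomial whose lowest exponent is $\geq1$ (since $t_i\geq1$) and whose degree is strictly less than that of the denominator, $a+1+|c_2|$ — a short inequality using $0<t_i\leq a$ and $|c_2|\leq a/2$. Hence Corollary~\ref{e-CTECoro} gives $0=H(0)=A_1(0)+A_{\gamma}(0)+A_{\beta_2}(0)$, where $A_{\gamma}$ and $A_{\beta_2}$ are the coefficients of the decomposition of $H$ at $1-\lambda\mathbf{z^{\gamma}}$ and $1-\lambda^{c_2}\mathbf{z^{\beta_2}}$, so $F=A_1(0)=-A_{\gamma}(0)-A_{\beta_2}(0)$. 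Since $1-\lambda\mathbf{z^{\gamma}}$ has index $1$, \eqref{AS} evaluates $A_{\gamma}(0)=H(\lambda)(1-\lambda\mathbf{z^{\gamma}})\big|_{\lambda=\mathbf{z}^{-\gamma}}=\dfrac{\mathbf{z}^{\mathbf{m_1}^{\prime}-t_1\gamma}-\mathbf{z}^{\mathbf{m_2}^{\prime}-t_2\gamma}}{(1-\mathbf{z}^{\alpha-a\gamma})(1-\mathbf{z}^{\beta_2-c_2\gamma})}$, so $-A_{\gamma}(0)$ is exactly the rational function in~\eqref{FRation-and-F}. And, by the definition of the underlined constant term, $A_{\beta_2}(0)=F(\underline{\lambda^{c_2}\mathbf{z^{\beta_2}}},\lambda\mathbf{z^{\gamma}},\lambda^{a}\mathbf{z^{\alpha}},\lambda^{t_1}\mathbf{z}^{\mathbf{m_1}^{\prime}},\lambda^{t_2}\mathbf{z}^{\mathbf{m_2}^{\prime}})$; swapping the two numerator monomials flips the sign, so $-A_{\beta_2}(0)=F(\underline{\lambda^{c_2}\mathbf{z^{\beta_2}}},\lambda\mathbf{z^{\gamma}},\lambda^{a}\mathbf{z^{\alpha}},\lambda^{t_2}\mathbf{z}^{\mathbf{m_2}^{\prime}},\lambda^{t_1}\mathbf{z}^{\mathbf{m_1}^{\prime}})$. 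Adding the two contributions is precisely~\eqref{FRation-and-F}; note that the new underlined index is $|c_2|\leq a/2$, which makes the whole scheme terminate after $O(\log a)$ iterations.

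The hard part is the vanishing $H(0)=0$ together with properness used above: these are exactly what force the choices in the reduction step — the signed remainder for $\lambda^{c}\mathbf{z^{\beta}}$ (keeping $|c_2|\leq a/2$, which doubles as the termination guarantee), the ordinary starred remainder for the numerator monomials (keeping the $t_i$ in $(0,a]$, so the numerator vanishes at $0$ and has degree below the denominator), and the sign flip when $c_2\leq0$. Once those are pinned down the degree bookkeeping is routine, but two degenerate situations still need a word: $c_2=0$, where $1-\mathbf{z^{\beta_2}}$ is merely a scalar factor and the recursive $F$ collapses to the $a=0$ base case; and any accidental coincidence among the exponent vectors that would destroy pairwise coprimality of the three denominator factors, which is excluded by the genericity of $\mathbf{z}$.
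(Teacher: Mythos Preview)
Your argument is correct and follows the same approach as the paper: reduce modulo $1-\lambda^{a}\mathbf{z^{\alpha}}$ using $\mathrm{srem}^{\star}$ and $\mathrm{rem}^{\star}$ (with the sign flip when the signed remainder is negative), then apply Corollary~\ref{e-CTECoro} to move the underline off the index-$a$ factor onto the index-$1$ and index-$c_2$ factors. You are in fact more explicit than the paper in verifying that $H$ is proper with $H(0)=0$; the only place you are looser is the precise order of operations in the negative-remainder case, which the paper resolves by an explicit two-case definition of $(t_i,\mathbf{m_i}')$ (applying $\mathrm{rem}^{\star}$ to $r_j-c_1$ after the sign flip rather than to $r_j$ before it).
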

Let us delay the proof of this lemma.
The first term in \eqref{FRation-and-F} corresponds to the constant term of type $(\ci 2; a_{11},1,a_{31})$ within the flow chart \eqref{FlowChart}. By $c_2\leq \frac{a}{2}$, the second term in \eqref{FRation-and-F} corresponds to the constant term of type $(\ci 1; a_{12},1,a_{32})$ with index $a_{12}\leq \frac{a_{11}}{2}$ in the flow chart \eqref{FlowChart}. Finally, by iteratively applying Lemma \ref{LongDiven} at most $p(\leq \log a)$ steps, we can derive a summation formula for rational functions only involving $z_1,z_2,z_3$.

\begin{proof}
Let $\lambda^{c_1}\mathbf{z}^{\beta_1}=\mathrm{srem}^{\star}(\lambda^c,1-\mathbf{z^{\alpha}}\lambda^a,\lambda)\cdot \mathbf{z}^{\beta}$.
If $0\leq c_1\leq \frac{a}{2}$, then
\begin{align*}
F(\underline{\lambda^{a}\mathbf{z^{\alpha}}}, \lambda\mathbf{z^{\gamma}},  \lambda^{c}\mathbf{z^{\beta}},
\lambda^{r_1}\mathbf{z^{m_1}},\lambda^{r_2}\mathbf{z^{m_2}})
=\mathop{\mathrm{CT}}\limits_{\lambda}\frac{\lambda^{r_1}\mathbf{z^{m_1}}
-\lambda^{r_2}\mathbf{z^{m_2}}}{\underline{(1-\lambda^{a}\mathbf{z^{\alpha}})}
(1-\lambda\mathbf{z^{\gamma}})(1-\lambda^{c_1}\mathbf{z^{\beta_1}})}
\end{align*}
If $-\frac{a}{2}<c_1<0$, then
\begin{align*}
F(\underline{\lambda^{a}\mathbf{z^{\alpha}}}, \lambda\mathbf{z^{\gamma}},  \lambda^{c}\mathbf{z^{\beta}},
\lambda^{r_1}\mathbf{z^{m_1}},\lambda^{r_2}\mathbf{z^{m_2}})
=\mathop{\mathrm{CT}}\limits_{\lambda}\frac{-(\lambda^{r_1}\mathbf{z^{m_1}}
-\lambda^{r_2}\mathbf{z^{m_2}})(\lambda^{-c_1}\mathbf{z^{-\beta_1}})}{\underline{(1-\lambda^{a}\mathbf{z^{\alpha}})}
(1-\lambda\mathbf{z^{\gamma}})(1-\lambda^{-c_1}\mathbf{z^{-\beta_1}})}
\end{align*}
Let
\begin{align*}
\lambda^{c_2}\mathbf{z^{\beta_2}}=\left\{
\begin{aligned}
& \lambda^{c_1}\mathbf{z^{\beta_1}} &\ \ \ \text{if }\ \ & 0\leq c_1\leq \frac{a}{2}, \\
& \lambda^{-c_1}\mathbf{z^{-\beta_1}} &\ \ \  \text{if }\ \  & -\frac{a}{2}<c_1<0. \\
\end{aligned}
\right.
\end{align*}
Let
\begin{align*}
\lambda^{t_1}\mathbf{z}^{\mathbf{m_1}^{\prime}}=\left\{
\begin{aligned}
& \mathrm{rem}^{\star}(\lambda^{r_1},1-\lambda^a\mathbf{z^{\alpha}},\lambda)\cdot\mathbf{z^{m_1}} &\ \ \ \text{if }\ \ & 0\leq c_1\leq \frac{a}{2}, \\
& \mathrm{rem}^{\star}(\lambda^{r_2-c_1},1-\lambda^a\mathbf{z^{\alpha}},\lambda)\cdot\mathbf{z^{m_2-\beta_1}} &\ \ \  \text{if }\ \  & -\frac{a}{2}<c_1<0, \\
\end{aligned}
\right.
\end{align*}
and
\begin{align*}
\lambda^{t_2}\mathbf{z}^{\mathbf{m_2}^{\prime}}=\left\{
\begin{aligned}
& \mathrm{rem}^{\star}(\lambda^{r_2},1-\lambda^a\mathbf{z^{\alpha}},\lambda)\cdot\mathbf{z^{m_2}} &\ \ \ \text{if }\ \ & 0\leq c_1\leq \frac{a}{2}, \\
& \mathrm{rem}^{\star}(\lambda^{r_1-c_1},1-\lambda^a\mathbf{z^{\alpha}},\lambda)\cdot\mathbf{z^{m_1-\beta_1}} &\ \ \  \text{if }\ \  & -\frac{a}{2}<c_1<0, \\
\end{aligned}
\right.
\end{align*}
Then we have
\begin{align*}
&F(\underline{\lambda^{a}\mathbf{z^{\alpha}}}, \lambda\mathbf{z^{\gamma}},  \lambda^{c}\mathbf{z^{\beta}},
\lambda^{r_1}\mathbf{z^{m_1}},\lambda^{r_2}\mathbf{z^{m_2}})
=\mathop{\mathrm{CT}}\limits_{\lambda}\frac{\lambda^{t_1}\mathbf{z}^{\mathbf{m_1}^{\prime}}
-\lambda^{t_2}\mathbf{z}^{\mathbf{m_2}^{\prime}}}{\underline{(1-\lambda^{a}\mathbf{z^{\alpha}})}
(1-\lambda\mathbf{z^{\gamma}})(1-\lambda^{c_2}\mathbf{z^{\beta_2}})}
\\=&\mathop{\mathrm{CT}}\limits_{\lambda}\frac{\lambda^{t_2}\mathbf{z}^{\mathbf{m_2}^{\prime}}
-\lambda^{t_1}\mathbf{z}^{\mathbf{m_1}^{\prime}}}{(1-\lambda^{a}\mathbf{z^{\alpha}})
\underline{(1-\lambda\mathbf{z^{\gamma}})(1-\lambda^{c_2}\mathbf{z^{\beta_2}})}}\ \ \ \text{(by Corollary \ref{e-CTECoro})}
\\=&\frac{\mathbf{z}^{\mathbf{m_2}^{\prime}-t_2\gamma}-\mathbf{z}^{\mathbf{m_1}^{\prime}-t_1\gamma}}{(1-\mathbf{z}^{\alpha-a\gamma})
(1-\mathbf{z}^{\beta_2-c_2\gamma})}+F(\underline{\lambda^{c_2}\mathbf{z^{\beta_2}}}, \lambda\mathbf{z^{\gamma}},  \lambda^{a}\mathbf{z^{\alpha}},\lambda^{t_2}\mathbf{z^{m_2}}^{\prime},\lambda^{t_1}\mathbf{z^{m_1}}^{\prime}).
\end{align*}
This completes the proof.
\end{proof}

\subsection{Dispelling the Slack Variables}\label{DispellSlackV}

In Subsection \ref{RecurOper}, we obtain a summation of the following form.
\begin{align}\label{FiniteZZ}
\sum_{\text{Finite term}}\frac{\mathbf{z^{m_1}}-\mathbf{z^{m_2}}}{(1-\mathbf{z^{\omega}})(1-\mathbf{z^{\theta}})},
\end{align}
where the number of terms is bounded by $\log a+\log b$.

To eliminate the slack variables, as mentioned in \cite{Xin15}, we need to take $z_i\rightarrow 1$ for the final summation.
We first choose a vector $\mu=(\mu_1,\mu_2,\mu_3)$ and substitute $z_i\mapsto \kappa^{\mu_i}$ for $i=1,2,3$, 
and then $\kappa\mapsto e^s$ \footnote{This is a special constant term of GTodd type handled in \cite{XinZhangGTod}. The natural substitution $\kappa \mapsto 1+s$ also works.}.
Finally, we need to calculate the following constant term.
\begin{align}\label{AZ3}
\sum_{\text{Finite term}}\mathop{\mathrm{CT}}\limits_{s}\frac{e^{s\cdot \langle \mathbf{m_1},\mu\rangle}-e^{s\cdot \langle \mathbf{m_2},\mu\rangle}}{(1-e^{s\cdot\langle \omega,\mu\rangle})(1-e^{s\cdot\langle \theta,\mu\rangle})}.
\end{align}
It is essential to choose $\mu$ such that there are no zero denominators in Equation \eqref{AZ3}. We can use random vectors to get $\mu$ (see \cite{DeLoera}).
For \eqref{AZ3}, we also used the linearity of the $\mathrm{CT}$ operator.

Let $h_1=\langle \omega,\mu\rangle$, $h_2=\langle \theta,\mu\rangle$, $h_3=\langle \mathbf{m_1},\mu\rangle$ and $h_4=\langle \mathbf{m_2},\mu\rangle$. 
It is straightforward to verify that
\begin{align}\label{CT5times}
\mathop{\mathrm{CT}}\limits_{s}\frac{e^{h_3s}-e^{h_4s}}{(1-e^{h_1s})(1-e^{h_2s})}
=\frac{(h_4-h_3)(h_1+h_2-h_3-h_4)}{2h_1h_2}.
\end{align}

\section{The Algorithm}

We first give the algorithm, then do complexity analysis, and finally give an example. 

\begin{algorithm}\label{TheAlgorithmBEG}
\DontPrintSemicolon
\KwInput{$n\in \mathbb{N}$, $a,b,c\in \mathbb{P}$, $a<b<c$, and $\gcd(a,b,c)=1$.}
\KwOutput{$d(n;a,b,c)$.}

Use the extended gcd algorithm to compute $u,v$ such that $au+bv=g_3=\gcd(a,b)$,
and apply \eqref{PrimeToAB} to assume $\gcd(a,b)=1$.

By \eqref{FirstSecond}, reduction to two constant terms $T(a), T(b)$.

For $T(a)$, we first reduce it to the case $b=1$ (Equation \eqref{ToZZNeed11}) by using $(a,v)=1$ and Theorem \ref{keyTR}.
Then, iteratively applying Lemma \ref{LongDiven} at most $\log a$ steps, we obtain a summation formula $R_1$ of simple rational functions.

For $T(b)$, we first reduce it to the case $a=1$ by employing $(b,u)=1$ and Theorem \ref{keyTR}.
Then, iteratively applying Lemma \ref{LongDiven} at most $\log b$ steps, we obtain a summation formula $R_2$ for a simple rational function.

We obtain \eqref{FiniteZZ} by $R_1+R_2$ of at most $\log a+\log b$ simple rational functions.

Apply \eqref{CT5times} to each term of \eqref{AZ3}. We obtain $d(n;a,b,c)$.

\caption{Computing $d(n;a,b,c)$.}
\end{algorithm}

\begin{thm}
Let $n\in\mathbb{N}$ and $a,b,c\in \mathbb{P}$. Suppose $a<b<c$ and $\gcd(a,b,c)=1$. 
Then Algorithm \ref{TheAlgorithmBEG} correctly computes $d(n;a,b,c)$ with a computational complexity of $O(\log b)$.
\end{thm}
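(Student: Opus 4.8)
The plan is to verify, one at a time, that each line of Algorithm~\ref{TheAlgorithmBEG} is (a) mathematically sound, using the results already established, and (b) executable within the claimed cost, and then to sum the costs. Correctness is almost entirely assembled from the preceding section: the opening reduction is Lemma~\ref{abprime} applied once (display~\eqref{PrimeToAB}), after which we may assume $\gcd(a,b)=1$; the split into $T(a)+T(b)$ is display~\eqref{FirstSecond}, justified by Corollary~\ref{e-CTECoro}; the reduction of each piece to the case with a unit exponent is display~\eqref{ToZZNeed11}, valid because $au+bv=1$ forces $\gcd(a,v)=\gcd(b,u)=1$ so Theorem~\ref{keyTR} applies; the iteration is Lemma~\ref{LongDiven}, whose second output term has index at most half the previous index, so after at most $\lceil\log_2 a\rceil$ (resp.\ $\lceil\log_2 b\rceil$) applications the index drops to $0$ or $1$, the two base cases recorded after \eqref{ToZZNeed22}; and the final evaluation at $z_i\to 1$ is the specialization $z_i\mapsto e^{\mu_i s}$ of Subsection~\ref{DispellSlackV}, with each resulting constant term in $s$ given in closed form by \eqref{CT5times}. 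I would present this as a short paragraph: ``By \eqref{PrimeToAB}, \eqref{FirstSecond}, \eqref{ToZZNeed11}, Lemma~\ref{LongDiven}, and \eqref{CT5times} in turn, the algorithm outputs $d(n;a,b,c)$.'' The only genuine content to check is that Lemma~\ref{LongDiven} really does terminate: since a nonnegative integer index that at least halves each step reaches $\le 1$ after at most $\log_2(\text{start})+O(1)$ steps, and the starting indices are $a$ and $b$, termination after $\le \log a$ and $\le\log b$ steps respectively is immediate.

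For the complexity bound I would adopt a uniform-cost arithmetic model (each ring/field operation on the rational exponents and on the integers $a,b,c,n$ counts as $O(1)$), state this explicitly, and then tally. The extended-gcd step on $(a,b)$ costs $O(\log b)$ (it is $O(\log\min(a,b))=O(\log a)$, but $a<b$); all the subsequent divisions in \eqref{PrimeToAB}--\eqref{PireWsiePrime} are $O(1)$. The split \eqref{FirstSecond} is $O(1)$ (one integer $s$ with $0<sc-n\le c$, i.e.\ $s=\lceil n/c\rceil$). Each invocation of Lemma~\ref{LongDiven} costs $O(1)$: it is one signed remainder and two (plain) remainders of monomials modulo $1-\mathbf z^{\alpha}\lambda^a$, i.e.\ a constant number of integer divisions and vector additions on the exponent data, plus recording one rational-function summand. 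Hence the $T(a)$ branch costs $O(\log a)$ and the $T(b)$ branch costs $O(\log b)$. Concatenating the two summations into \eqref{FiniteZZ} is $O(\log a+\log b)$ bookkeeping; choosing $\mu$ and performing the substitutions is $O(\log a+\log b)$ (a constant amount of work per term); and applying \eqref{CT5times} to each of the $\le \log a+\log b$ terms and summing is again $O(\log a+\log b)$. Adding everything and using $a<b$, the total is $O(\log b)+O(\log a)+O(\log a+\log b)=O(\log b)$, which is the claim.

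The main obstacle — and the point I would spend the most care on — is the seemingly innocuous claim that each step is genuinely $O(1)$, i.e.\ that the data being manipulated does not blow up. Two sub-issues deserve attention. First, the key transformation (Theorem~\ref{keyTR}) and the remainders $\mathrm{rem}^\star,\mathrm{srem}^\star$ introduce \emph{rational} exponents $\mathbf z^{m}$ with $m_{ij}\in\mathbb Q$ (already flagged after \eqref{ToZZNeed22}); one must argue that the numerators and denominators of these rationals stay polynomially bounded, so that each arithmetic operation on them is $O(1)$ in the uniform model and the final $\mu$ can be chosen with bounded entries avoiding all $\le\log a+\log b$ hyperplanes $\langle\omega,\mu\rangle=0$. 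This follows because the only fraction ever introduced is division by the multiplier $v$ (or $u$) in \eqref{ToZZNeed11}, and $|u|,|v|<b$, while the subsequent $\mathrm{rem}^\star$ operations only add integer multiples of existing exponents; so all denominators divide $uv$ and all numerators are integers bounded by a fixed polynomial in $b$ and $n$. Second, one should check that ``index $a_{1(i+1)}=a_{3i}'\le a_{1i}/2$'' in Lemma~\ref{LongDiven} is exactly the $\mathrm{srem}^\star$ bound $|c_2|\le a/2$, so the halving — hence the $\le\log a$, $\le\log b$ step counts — is rigorous and not merely heuristic. Everything else is a routine line-by-line audit of the algorithm against the lemmas, which I would compress into a single displayed chain of references rather than reproduce in full.
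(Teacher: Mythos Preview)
Your proposal is correct and follows essentially the same step-by-step audit as the paper's own proof: verify each line of the algorithm against the corresponding lemma or display, and tally the costs to obtain $O(\log a)+O(\log b)+O(\log a+\log b)=O(\log b)$. You are in fact more careful than the paper on two points it glosses over---bounding the growth of the rational exponents introduced by Theorem~\ref{keyTR} and the $\mathrm{rem}^\star/\mathrm{srem}^\star$ operations, and ensuring a suitable $\mu$ can be chosen in the stated time---so your treatment is, if anything, a strict refinement of the paper's argument.
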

\begin{proof}
In step 1, the primary task is the computation of $u,v$, which costs $O(\log a)$. Subsequently, we arrive at the $\gcd(a,b)=1$ case.

Step 2 takes several ring operations to reach the constant term $T(a)+T(b)$.

Step 3 performs at most $\log a$ iterative application of Lemma \ref{LongDiven}, each reducing the index of the constant term by half. Each iteration step takes $O(1)$ ring operations since the two operations $\mathrm{rem}(\lambda^m,1-z\lambda^a,\lambda)$ and $\mathrm{srem}(\lambda^m,1-z\lambda^a,\lambda)$ are both $O(1)$. This results in a total cost of $O(\log a)$ for this step.

Step 4 is similar to Step 3. It takes $O(\log b)$ ring operations. 

Now, we arrive at the summation in \eqref{FiniteZZ}.

Step 5 takes $O(\log a+\log b)$, as each constant term is computed by \eqref{CT5times} using $O(1)$ ring operations. 

In total, the algorithm cost $O(\log b)$ ring operations.
\end{proof}

\begin{exa}
Let $a=3$, $b=7$, $c=11$ and $n=25$. We have
\begin{small}
\begin{align*}
d(25;3,7,11)
=\lim_{z_i\rightarrow 1}\left(\mathop{\mathrm{CT}}\limits_{\lambda}\frac{\lambda^{-25}-\lambda^{8}z_3^3}{\underline{(1-\lambda^{3}z_1)}
(1-\lambda^{7}z_2)(1-\lambda^{11}z_3)}
+\mathop{\mathrm{CT}}\limits_{\lambda}\frac{\lambda^{-25}-\lambda^{8}z_3^3}{(1-\lambda^{3}z_1)
\underline{(1-\lambda^{7}z_2)}(1-\lambda^{11}z_3)}\right).
\end{align*}
\end{small}
For the first term, we obtain
\begin{small}
\begin{align*}
&\mathop{\mathrm{CT}}\limits_{\lambda}\frac{\lambda^{2}z_1^9-\lambda^{2}z_1^{-2}z_3^3}{\underline{(1-\lambda^{3}z_1)}
(1-\lambda^{7}z_2)(1-\lambda^{11}z_3)}
=\mathop{\mathrm{CT}}\limits_{\lambda}\frac{\lambda^{2}z_1^9-\lambda^{2}z_1^{-2}z_3^3}{\underline{(1-\lambda^{3}z_1)}
(1-\lambda z_1^{-2}z_2)(1-\lambda^{11}z_3)}
\\=&\mathop{\mathrm{CT}}\limits_{\lambda}\frac{\lambda^{2}z_1^9-\lambda^{2}z_1^{-2}z_3^3}{\underline{(1-\lambda^{3}z_1)}
(1-\lambda z_1^{-2}z_2)(1-\lambda^{-1}z_1^{-4}z_3)}
=\mathop{\mathrm{CT}}\limits_{\lambda}\frac{\lambda^{3}z_1^2z_3^2-\lambda^{3}z_1^{13}z_3^{-1}}{\underline{(1-\lambda^{3}z_1)}
(1-\lambda z_1^{-2}z_2)(1-\lambda z_1^{4}z_3^{-1})}
\\=&\mathop{\mathrm{CT}}\limits_{\lambda}\frac{\lambda^{3}z_1^{13}z_3^{-1}-\lambda^{3}z_1^2z_3^2}{(1-\lambda^{3}z_1)
\underline{(1-\lambda z_1^{-2}z_2)}(1-\lambda z_1^{4}z_3^{-1})}
+\mathop{\mathrm{CT}}\limits_{\lambda}\frac{\lambda^{3}z_1^{13}z_3^{-1}-\lambda^{3}z_1^2z_3^2}{(1-\lambda^{3}z_1)
(1-\lambda z_1^{-2}z_2)\underline{(1-\lambda z_1^{4}z_3^{-1})}}
\\=&\frac{z_1^{19}z_2^{-3}z_3^{-1}-z_1^{8}z_2^{-3}z_3^2}{(1-z_1^7z_2^{-3})(1-z_1^{6}z_2^{-1}z_3^{-1})}
+\frac{z_1z_3^2-z_1^{-10}z_3^5}{(1-z_1^{-11}z_3^3)(1-z_1^{-6}z_2z_3)}.
\end{align*}
\end{small}
Similarly, for the second term, we get
\begin{small}
\begin{align*}
&\mathop{\mathrm{CT}}\limits_{\lambda}\frac{\lambda^{3}z_2^4-\lambda z_2^{-1}z_3^3}{(1-\lambda^{3}z_1)
\underline{(1-\lambda^{7}z_2)}(1-\lambda^{11}z_3)}
=\mathop{\mathrm{CT}}\limits_{\lambda}\frac{\lambda^{3\cdot (-2)}z_2^4-\lambda^{-2} z_2^{-1}z_3^3}{(1-\lambda^{3\cdot (-2)}z_1)
\underline{(1-\lambda^{7}z_2^{-\frac{1}{2}})}(1-\lambda^{11\cdot (-2)}z_3)}
\\=&\mathop{\mathrm{CT}}\limits_{\lambda}\frac{\lambda z_2^{\frac{7}{2}}-\lambda^{5} z_2^{-\frac{3}{2}}z_3^3}{(1-\lambda z_1z_2^{-\frac{1}{2}})
\underline{(1-\lambda^{7}z_2^{-\frac{1}{2}})}(1-\lambda^{-1}z_2^{-\frac{3}{2}}z_3)}
=\mathop{\mathrm{CT}}\limits_{\lambda}\frac{\lambda^6 z_3^{2}-\lambda^{2} z_2^{5}z_3^{-1}}{(1-\lambda z_1z_2^{-\frac{1}{2}})
\underline{(1-\lambda^{7}z_2^{-\frac{1}{2}})}(1-\lambda z_2^{\frac{3}{2}}z_3^{-1})}
\\=&\mathop{\mathrm{CT}}\limits_{\lambda}\frac{\lambda^2 z_2^{5}z_3^{-1}-\lambda^{6} z_3^2}{\underline{(1-\lambda z_1z_2^{-\frac{1}{2}})}(1-\lambda^{7}z_2^{-\frac{1}{2}})(1-\lambda z_2^{\frac{3}{2}}z_3^{-1})}
+\mathop{\mathrm{CT}}\limits_{\lambda}\frac{\lambda^2 z_2^{5}z_3^{-1}-\lambda^{6} z_3^2}{(1-\lambda z_1z_2^{-\frac{1}{2}})
(1-\lambda^{7}z_2^{-\frac{1}{2}})\underline{(1-\lambda z_2^{\frac{3}{2}}z_3^{-1})}}
\\=&\frac{z_1^{-2}z_2^6z_3^{-1}-z_1^{-6}z_2^3z_3^2}{(1-z_1^{-7}z_2^3)(1-z_1^{-1}z_2^{2}z_3^{-1})}
+\frac{z_2^2z_3-z_2^{-9}z_3^8}{(1-z_1z_2^{-2}z_3)(1-z_2^{-11}z_3^7)}.
\end{align*}
\end{small}
We take $\mu=(0,1,1)$. Then we have
\begin{small}
\begin{align*}
&d(25;3,7,11)
\\=& \mathop{\mathrm{CT}}\limits_{s}\frac{e^{-4s}-e^{-s}}{(1-e^{-3s})(1-e^{-2s})}
+\mathop{\mathrm{CT}}\limits_{s}\frac{e^{2s}-e^{5s}}{(1-e^{3s})(1-e^{2s})}
+\mathop{\mathrm{CT}}\limits_{s}\frac{e^{5s}-e^{5s}}{(1-e^{3s})(1-e^{s})}
+\mathop{\mathrm{CT}}\limits_{s}\frac{e^{3s}-e^{-s}}{(1-e^{-s})(1-e^{-4s})}
\\=& 0-\frac{1}{2}+0+\frac{7}{2}=3.
\end{align*}
\end{small}
\end{exa}

\section{Concluding Remark}

The constant term of type $(\ci s; a_1,\dots, a_k)$ corresponds to a denumerant simplicial cone, as detailed in \cite{XinZhang}.
A basic problem is to write such a constant term as the sum of a $N$ rational functions, so that $N$ is small in some sense.
Barvinok's algorithm in computational geometry applies to show that $N$ can be bounded by a polynomial in $\log a_s$
when $k$ is fixed. In algebraic combinatorics, it is conjectured in \cite{XinZhang} that $f^h(\ci s; a_1,\dots, a_k)$ is bounded by a polynomial in $\log a_s$ for a suitable strategy $h$ on the choices of the valid multiplier. Here, $f^h(\ci s; a_1,\dots, a_k)$ denotes the number of terms obtained by repeated application of Theorem \ref{keyTR} and Corollary \ref{e-CTECoro} according to $h$.

A partial result is presented in \cite{XinZhang} for the case where $a_1 \leq 13$, where the optimal strategy $h$ is specified, and the value of $f^h(\ci 1; a_1, a_2, 1, \dots, 1)$ is explicitly determined to be bounded by a polynomial in $k$.

Lemma \ref{LongDiven} is equivalent to giving a simple strategy $h$ so that $f^h(\ci s; a_1,a_2,a_3)\leq \log a_s$:
First transform to $(\ci1;a_1,a_2,1)$ and then use the fact $f^1(\ci 1;a_1,a_2,1)\leq \log a_1$, where $h=1$ means to always choose $1$ as the valid multiplier.
The fact naturally extends to general length $k$ as
$f^1(\ci 1;a_1,a_2,1,\dots,1)\leq (k-2) \log a_1$.

Extending the above idea gives the following partial, but stronger, result:
\begin{prop}
If one application of Theorem \ref{keyTR}
results in a constant term of type $(\ci 1; a_1,\dots, a_k) $ such that
for an $\ell$ we have $|a_i-a_{i'}|\leq 13$ for $2\leq i,i'\leq \ell$, and $a_j\leq 13$ for $j>\ell$, 
then there exists a strategy $h$ such that $f^h(\ci 1; a_1,\dots, a_k) \leq p(k) \log a_1 $ for some polynomial $p(k)$.
\end{prop}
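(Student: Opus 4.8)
The plan is to push the Euclidean-style recursion behind Lemma \ref{LongDiven} through the cluster of nearly equal indices $a_2,\dots,a_\ell$, using three ingredients: an easy \emph{bounded-regime} estimate, a \emph{cluster-collapse} step, and a careful accounting of the resulting recursion tree. For the bounded regime I would first record that if a constant term of type $(\ci j;b_1,\dots,b_k)$ has all indices $b_i\le 13$, then $f^h\le p_0(k)$ for a fixed polynomial $p_0$: reducing every non-underlined factor modulo the underlined one (via $\mathrm{srem}^\star$, $\mathrm{rem}^\star$) and then splitting by Corollary \ref{e-CTECoro} replaces the term by at most $k-1$ terms, all of whose indices are at most half the previous maximum; starting from $13$, after a constant number of rounds every index is $\le 1$ and each branch ends in a single rational function, so the total is $(k-1)^{O(1)}=p_0(k)$. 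Properness and vanishing at $\lambda=0$, needed to apply Corollary \ref{e-CTECoro}, hold because a $\mathrm{rem}^\star$-reduced numerator has exponent in $(0,b_j]$ while the denominator degree exceeds $b_j$; factors that reduce to $\lambda$-degree $0$ are dealt with routinely.

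The second ingredient is the cluster-collapse step. If the underlined index is $A$ and some other factor has index $B$ with $|A-B|\le 13$, then $\mathrm{srem}^\star(\lambda^{B},1-\mathbf z^{\alpha}\lambda^{A},\lambda)$ has exponent of absolute value $\le 13$ (for $A>26$ the quotient is forced to be $1$; for $A\le 26$ the signed remainder already has absolute value $\le A/2\le 13$), so after the sign-flip used in the proof of Lemma \ref{LongDiven} that factor becomes one of index $\le 13$. Moreover reducing modulo a large index preserves small gaps: if $a_1>13$ and $|a_i-a_{i'}|\le 13$, then $|(a_i\bmod a_1)-(a_{i'}\bmod a_1)|\le 13$, so a cluster of indices within $13$ of one another remains such a cluster after a reduction modulo $a_1$.

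Now I would run the recursion on $(\ci 1;a_1,\dots,a_k)$. Reduce every factor modulo $a_1$ and split: the indices $a_{\ell+1},\dots,a_k$ stay $\le 13$, the cluster becomes a cluster $a_2',\dots,a_\ell'$ of magnitude $\le a_1/2$, and we obtain $k-1$ terms. Each term with $j>\ell$ has underlined index $\le 13$, and one further reduction (modulo that small index) pushes $a_1$ and the whole cluster into the bounded regime, costing $\le p_0(k)$. Each term with $2\le i\le\ell$ has underlined index $a_i'\le a_1/2$ with the remaining cluster within $13$ of $a_i'$; one further reduction modulo $a_i'$ uses cluster-collapse to turn every other cluster element and every small factor into an index $\le 13$, while $a_1$ becomes an index $\le a_1/4$. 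After the ensuing split, exactly one resulting term --- the contribution of the reduced $a_1$-factor --- has the shape $[\text{underlined index}\le a_1/4]+[\text{one factor of index}\le a_1/2]+[\text{all other indices}\le 13]$, and the remaining $k-2$ terms have underlined index $\le 13$ and fall into the bounded regime after one more reduction. This shape is self-reproducing: one more reduction modulo the underlined index at least halves it, keeps the $\le 13$-block $\le 13$, converts the lone medium factor into the new (halved) underlined index, and spawns $\le k-2$ bounded-regime branches plus one continuation of the same shape. Hence it generates a chain of length $\le\log_2 a_1+O(1)$, each link contributing $\le (k-2)p_0(k)$ terminal terms; adding the $\le k$ cluster branches and $\le k$ small branches of the first reduction,
$$f^h(\ci 1;a_1,\dots,a_k)\le k\bigl((\log_2 a_1+O(1))(k-2)p_0(k)+O(1)\bigr)=p(k)\log a_1,$$
for a suitable polynomial $p$; the single application of Theorem \ref{keyTR} in the hypothesis does not split terms, so it is free.

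The main difficulty I anticipate is the bookkeeping rather than any isolated estimate: one has to name precisely the ``medium'' non-underlined factor at each step, verify it strictly decreases so the chain has length $O(\log a_1)$, check that the sign-flips triggered by negative signed remainders merely multiply numerators by monomials (exactly as in the proof of Lemma \ref{LongDiven}) and hence preserve the shape, and confirm that every branch other than the unique continuation genuinely lands in the bounded regime after one further reduction. The residue computations (the $A>26$ versus $A\le 26$ dichotomy, and preservation of the $\le 13$ spread) are elementary, but must be applied uniformly so that $p$ and its degree do not depend on $a_1,\dots,a_k$ or on $\ell$.
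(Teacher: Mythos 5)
Your proposal is correct in substance and in the same spirit as the paper's (very brief) sketch---a Euclid-style recursion that peels off one term per step, exploits the fact that the near-equal indices $a_2,\dots,a_\ell$ collapse to $\le 13$ after a single reduction, and sends every side branch into a ``small-index'' regime of cost polynomial in $k$---but it takes a genuinely different route on two points. First, the paper simply cites the partial result of \cite{XinZhang} (the bound for $f^h(\ci 1;a_1,a_2,1,\dots,1)$ with $a_1\le 13$) as a black box, both for the $j>\ell$ branches and for the terminal outputs of the $\ell=2$ recursion; you instead prove a self-contained ``bounded regime'' estimate $f^h\le p_0(k)$ when all indices are $\le 13$, and bridge to it by one extra reduction whenever only the underlined index is small. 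Second, the paper reduces $\ell>2$ to $\ell=2$ in one stroke and then waves at the flow chart; you carry out the full bookkeeping directly, identifying a self-reproducing shape $[\text{underlined}\le X]+[\text{one medium factor}]+[\text{rest}\le 13]$ whose underlined index halves each step, giving the chain of length $O(\log a_1)$ with $O(k\cdot p_0(k))$ side branches per link. This buys a proof that does not depend on an external paper's partial result, at the cost of heavier accounting.

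Two precision points are worth repairing. The gap-preservation claim as stated, $|(a_i\bmod a_1)-(a_{i'}\bmod a_1)|\le 13$, is false for the ordinary remainder (take $a_1=20$, $a_i=19$, $a_{i'}=25$); what is true, and what Lemma \ref{LongDiven} actually uses, is that the \emph{signed} remainders $r_i,r_{i'}\in(-a_1/2,a_1/2]$ satisfy $\bigl||r_i|-|r_{i'}|\bigr|\le 13$ once $a_1>26$, and after the sign-flip the indices are $|r_i|,|r_{i'}|$---so the cluster is preserved in the sense you need, but the statement must be made for $|r_i|$, not for $a_i\bmod a_1$. Also, in the bounded-regime argument one reduction-plus-split does \emph{not} halve the maximum index (the old underlined index $b_j$ survives as a non-underlined factor); what halves each round is the underlined index, and the maximum halves only every two rounds, which still gives termination in $O(\log 13)=O(1)$ rounds and $f^h\le(k-1)^{O(1)}$, so the conclusion stands but the stated invariant should be the underlined index rather than the maximum.
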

\begin{proof}[Sketched proof]
The case $\ell=2$ follows the same idea as in the flow chart, except that
the outputs are constant terms handled by the partial result.

For $\ell>2$, the recursion yields
$$ f^h(\ci 1; a_1,\dots, a_k)=\sum_{i=2}^{\ell} f^h(\ci i; a_1,\dots, a_k) + \sum_{j=\ell+1}^{k} f^h(\ci j; a_1,\dots, a_k).$$
Since $a_j\leq 13$, each $f^h(\ci j; a_1,\dots, a_k)$ is bounded by a polynomial in $k$.
Since $|a_i-a_{i'}|\leq 13$ for $2\leq i,i'\leq \ell$, one more application of Corollary \ref{e-CTECoro} yields
$$ f^h(\ci i; a_1,\dots, a_k) =f^h(\ci i; b_1,\dots, b_{i-1},a_i, b_{i+1},\dots, b_k) = \sum_{j\neq i} f^h(\ci j; b_1,\dots, b_{i-1},a_i, b_{i+1},\dots, b_k),$$
where $b_j\leq 13$ for all $j\neq 1,i$. This is just the $\ell=2$ case of the proposition.
\end{proof}

Finally, it is natural to propose the following open problem:
\begin{prob}
Let $a<b<c$ and $\gcd(a,b,c)=1$. For any $n\in \mathbb{N}$, is there an algorithm to compute $d(n;a,b,c)$ with a runtime complexity of $O(\log a)$?
\end{prob}

%

\noindent
{\small \textbf{Acknowledgements:}
We are grateful to Zihao Zhang for many useful suggestions.
This work is partially supported by the National Natural Science Foundation of China [12071311].}

\end{document}